\newtheorem{thm}{Theorem}
\newtheorem{cor}[thm]{Corollary}
\newtheorem{lem}[thm]{Lemma}
\newtheorem{prop}[thm]{Proposition}
\newtheorem*{thm11}{Theorem 1.1}
\numberwithin{equation}{section}
\numberwithin{thm}{section}
\DeclareMathOperator{\RE}{Re}
\DeclareMathOperator{\spc}{ }
\DeclareMathOperator{\GL}{GL}
\DeclareMathOperator{\Gal}{Gal}
\DeclareMathOperator{\End}{End}
\DeclareMathOperator{\tensor}{\otimes}
\DeclareMathOperator{\alg}{alg}
\DeclareMathOperator{\an}{an}
\DeclareMathOperator{\cl}{cl}
\DeclareMathOperator{\semisimp}{ss}
\DeclareMathOperator{\Aut}{Aut}
\DeclareMathOperator{\ord}{ord}
\newcommand{\To}{\longrightarrow}
\newcommand{\R}{\mathbb{R}}
\newcommand{\C}{\mathbb{C}}
\newcommand{\Q}{\mathbb{Q}}
\newcommand{\Z}{\mathbb{Z}}
\newcommand{\F}{\mathbb{F}}
\newcommand{\abs}[1]{\left\vert#1\right\vert}
\newcommand{\set}[1]{\left\{#1\right\}}
\newcommand{\norm}[1]{\left\Vert#1\right\Vert}
\title[A rank inequality for the Tate Conjecture]{A rank inequality for the Tate Conjecture over global function fields}
\author{Christopher Lyons}
\date{March 2008}
\begin{document}

\maketitle
   
We present an observation of D. Ramakrishnan concerning the Tate Conjecture for varieties over a global function field (i.e., the function field of a smooth projecture curve over a finite field), which was pointed out during a lecture given at the AIM's workshop on the Tate Conjecture in July 2007.  The result is perhaps ``known to the experts," but we record it here, as it does not appear to be in print elsewhere.  We use the global Langlands correspondence for the groups $\GL_n$ over global function fields, proved by L. Lafforgue \cite{Laf}, along with an analytic result of H. Jacquet and J. Shalika \cite{JS} on automorphic $L$-functions for $\GL_n$.  Specifically, we use these to show (see Theorem 1.1 below) that, for a prime $\ell\neq \text{char } k$, the dimension of the subspace spanned by the rational cycles of codimension $m$ on our variety in its $2m$-th $\ell$-adic cohomology group (the so-called algebraic rank) is bounded above by the order of the pole at $s=m+1$ of the associated $L$-function (the so-called analytic rank).  The interest in this result lies in the fact that, with the exception of some special instances like certain Shimura varieties and abelian varieties which are potentially CM type, the analogous result for varieties over number fields is still unknown in general, even for the case of divisors ($m=1$).


\section{Preliminaries and Main Result}\label{prelim_sec}

Tate's original article \cite{T1} serves as a good reference for this section, and also gives insight into the motivation behind the conjectures.  The similar case of varieties over $\Q$, which has the additional advantage that singular cohomology and Hodge theory can be brought to bear on the problem, is discussed in \S1 of \cite{R1}.

Let $X$ be a smooth, projective, geometrically connected variety over a global function field $k$.  Let $\F_q$ denote the constant field of $k$ and $\bar k$ its separable closure.  Fix a prime $\ell\neq \text{char } k$.  For an integer $0\leq m\leq \dim X$, write 
\[
V_\ell=H_{\text{\'{e}t}}^{2m}(X\times_k\bar k,\Q_\ell)
\]
for the $2m$-th $\ell$-adic cohomology group, which is a finite-dimensional vector space over $\Q_\ell$.  The natural action of $\Gamma_k:=\Gal(\bar k/k)$ on $\bar k$ gives an action of $\Gamma_k$ on $X\times_k\bar k$, which in turn gives rise to a continuous linear action of $\Gamma_k$ on $V_\ell$.  Thus we get a continuous representation $\rho_\ell\colon\Gamma_k\to\Aut_{\Q_\ell}(V_\ell)$.  Moreover, for almost every place $v$ of $k$ (i.e., for all but a finite number), $\rho_\ell$ is \emph{unramified} at $v$, in the sense that the inertia subgroup $I_v$ of any decomposition group $D_v$ for $v$ is in the kernel of $\rho_\ell$.

To this representation $\rho_\ell$ of $\Gamma_k$ can be associated an $L$-function $L(\rho_\ell,s)$; we will not need the full $L$-function, but rather the incomplete form $L^S(\rho_\ell,s)$, where $S$ is any finite set of places containing those where either $\rho_\ell$ is ramified or $X$ has bad reduction.  By definition,
\[
L^S(\rho_\ell,s)=\prod_{v\notin S} L_v(\rho_\ell,s),
\]
where
\[
L_v(\rho_\ell,s)=\det\bigl(1-q_v^{-s}\rho_\ell(Fr_v)\bigr)^{-1}
\]
for any $v\notin S$.  Here $Fr_v$ is the \emph{geometric} Frobenius conjugacy class of $v$ in $\Gamma_k$ and $q_v$ is the residue cardinality of $v$.  Then by the proof of the Weil Conjectures \cite{De1}, we have $L_v(\rho_\ell,s)=Z_v(q_v^{-s})^{-1}$, where $Z_v(T)$ is a polynomial with coefficients in $\Z$ which factors as
\[
Z_v(T)=\prod_{i=1}^{b} (1-\alpha_{i,v}T),
\]
where $b=\dim_{\Q_\ell} V_\ell$ and each $\alpha_{i,v}$ has absolute value $q_v^m$ under any complex embedding.  (Note that the $\alpha_{i,v}$ are the eigenvalues of $\rho_\ell(Fr_v)$.)  It follows that the Euler product $L^S(\rho_\ell,s)$ converges absolutely for $\RE(s)>m+1$, and in fact uniformly on compact subsets, giving a holomorphic function in this half-plane.

Now let $\mathcal C^m$ denote group of cycles of codimension $m$ on $X$, which is the free abelian group generated by closed irreducible subvarieties of codimension $m$ on $X\times_{k} \bar k$.  Let
\[
V_\ell(m):=V_\ell\tensor_{\Q_\ell} \Q_\ell(m);
\]
here we set
\[
\Q_\ell(1):=\biggl(\varprojlim_{j} \spc \mu_{\ell^j}\biggr) \tensor_{\Z_\ell}\Q_\ell,
\]
with the action of $\Gamma_k$ given by its action on each $\mu_{\ell^j}$, the group of $\ell^j$th roots of unity of in $\bar k$, and then we take $\Q_\ell(m):=\Q_\ell(1)^{\otimes m}$.   (One calls $V_\ell(m)$ the $m$th \emph{Tate twist} of $V_\ell$.)  One can show (see \cite{Mil}, VI.9)  the existence of a canonical cycle class map
\[
\cl_m\colon\mathcal C^m\to V_\ell(m).
\]
There is a natural $\Gamma_k$-action on $\mathcal C^m$ coming from that on $X\times_k \bar k$, and it turns out that $\cl_m$ is a morphism of $\Gamma_k$-modules (i.e., is a $\Gamma_k$-equivariant map).  This means that a cycle in $(\mathcal C^m)^{\Gamma_k}$ maps into $V_\ell(m)^{\Gamma_k}$.

Define the following quantities:
\begin{align*}\label{three}
r_{\alg,k}^{(m)} &= \dim_{\Q_\ell} \Bigl[\cl_m\Bigl((\mathcal C^m)^{\Gamma_k}\Bigr)\otimes \Q_\ell\Bigr], \\
r_{\ell,k}^{(m)} &= \dim_{\Q_\ell} V_\ell(m)^{\Gamma_k}, \tag{1a}\\
r_{\an,k}^{(m)} &= -\text{ord}_{s=m+1} L^S(\rho_\ell,s).
\end{align*}
(If $L^S(\rho_\ell,s)$ is known to have meromorphic continuation to the point $s=m+1$, this last quantity makes sense as the order of pole at $s=m+1$; otherwise we take it to be the unique integer $a$, if it exists, such that
\[
\lim_{s\to m+1}(s-m-1)^a L^S(\rho_\ell,s)
\]
is finite and nonzero.  Also note that $r_{\an,k}^{(m)}$ is independent of our choice of $S$ by Deligne's proof of the Weil Conjectures, as long as $S$ satisfies the aforementioned conditions.) The first and last quantities are referred to as the \emph{algebraic} and \emph{analytic} ranks, respectively.  The $\Gamma_k$-equivariance of $\cl_m$ above gives that
\[
r_{\alg,k}^{(m)}\leq r_{\ell,k}^{(m)}.
\]
J. Tate's conjecture \cite{T1} is that, in fact, all three quantities in (\ref{three}) are equal.

In \S\ref{proof_sec} we will show\footnote{In the published version, the statement of this theorem contained an error. We have corrected it here and we thank Uwe Jannsen and Dinakar Ramakrishnan for bringing it to our attention.}

\begin{thm11}
For a smooth, projective, geometrically connected variety $X$ over a global function field $k$, we have $$ r_{\ell,k}^{(m)}\leq r_{\an,k}^{(m)},$$ and thus
\[
r_{\alg,k}^{(m)}\leq r_{\an,k}^{(m)},
\]
for any $0\leq m\leq \dim X$.
\end{thm11}

We remark that one knows $r_{\alg,k}^{(m)}\geq 1$; indeed, thinking of $X$ as embedded in some projective space over $k$, the (nonzero) cycle class of a hyperplane section is defined over $k$, and its $m$-fold cup product gives a nonzero cycle class in $V_\ell(m)$.  Thus the theorem gives $r_{\an,k}^{(m)}\geq 1$ as well.


\section{Automorphic representations of $\GL_n(\mathbb A_k)$ and their $L$-functions}\label{autom_sec}

Our strategy in proving that $r_{\ell,k}^{(m)}=r_{\an,k}^{(m)}$ is to use Lafforgue's result that the representation $\rho_\ell$ is \emph{modular}; that is to say, there is an automorphic representation of $\GL_n(\mathbb A_k)$ whose $L$-function has the same analytic behavior as that of $\rho_\ell$.  This is fortuitous, since the analytic behavior of automorphic $L$-functions is a priori much better understood than that of $L$-functions of Galois representations such as $\rho_\ell$.  For this reason, we take this to section to briefly recall facts about $L$-functions of cuspidal automorphic representations.  We refer the reader to \S1.2 of \cite{R1} or \S1.1 of \cite{Lau} for a more thorough introduction.

With $k$ still being a global function field, let $\mathbb A_k$ denote its ring of adeles, and let $\omega$ denote a \emph{unitary} idele class character of $k$.  We define a space of functions
\[
L^2(\omega):= L^2(\GL_n(k)Z(\mathbb A_k)\backslash\GL_n(\mathbb A_k),\omega),
\]
where $Z(\mathbb A_k)\simeq \mathbb A_k^\times$ denotes the center of $\GL_n(\mathbb A_k)$, as the (classes of) measurable functions $\phi\colon\GL_n(\mathbb A_k)\to \C$ which satisfy
\begin{itemize}
\item $\phi(\gamma g z)=\omega(z)\phi(g)$ for all $\gamma\in\GL_n(k)$, $g\in \GL_n(\mathbb A_k)$, and $z\in Z(\mathbb A_k)$,
\item $\int_{\GL_n(k)Z(\mathbb A_k)\backslash\GL_n(\mathbb A_k)}\abs{\phi(g)}^2 dg<\infty$;
\end{itemize}
note that the second condition makes sense, since the first condition and $\omega$ being unitary allow $\abs{\phi}$ to descend to a function on $\GL_n(k)Z(\mathbb A_k)\backslash\GL_n(\mathbb A_k)$.  There is a subspace $L_{\text{cusp}}^2(\omega)$ of $L^2(\omega)$ of those functions $\phi$ satisfying the following condition: if $U$ is the unipotent radical of any standard parabolic subgroup of $\GL_n$, then we have
\[
\int_{U(k)\backslash U(\mathbb A_k)} \phi(u g) du=0
\]
for almost all $g\in \GL_n(\mathbb A_k)$.  This subspace $L^2_{\text{cusp}}(\omega)$ is referred to as the space of \emph{cusp forms} on $\GL_n(\mathbb A_k)$ of \emph{central character $\omega$}.

We have a left action of $\GL_n(\mathbb A_k)$ on $L^2(\omega)$ by right translations (that is, by the action $(h\cdot \varphi)(g):=\varphi(gh)$ for $h\in \GL_n(\mathbb A_k)$).  This action happens to preserve $L_{\text{cusp}}^2(\omega)$, and thus $L_{\text{cusp}}^2(\omega)$ yields a complex representation of $\GL_n(\mathbb A_k)$.  This representation comes with a number of desirable properties: in particular, we have a semisimple decomposition
\[
L_{\text{cusp}}^2(\omega) \simeq \widehat\bigoplus_\pi V_\pi^{m_\pi},
\]
where $(\pi,V_\pi)$ runs over a system of representatives for isomorphism classes of irreducible \emph{admissible} complex representations of $\GL_n(\mathbb A_k)$.  Furthermore, the \emph{multiplicity one theorem for $\GL_n$} of Shalika says that, for any such $\pi$, we have either $m_\pi=1$ or $m_\pi=0$.  We define a \emph{cuspidal automorphic representation} of $\GL_n(\mathbb A_k)$ (or simply a \emph{cuspidal representation}) with central character $\omega$ to be any component $(\pi,V_\pi)$ of this direct sum for which $m_\pi=1$.

Now let $\omega$ be an arbitrary idele class character of $\mathbb A_k^\times/k^\times$, which is not necessarily unitary.  Let $\norm{\cdot}_{\mathbb A_k}$ denote the adelic norm on $\mathbb A_k$.  Then there is a unique $t\in\R$ and a unique unitary idele class character $\omega_0$ such that
\[
\omega=\omega_0\norm{\cdot}_{\mathbb A_k}^t.
\]
One may take the definition of a cuspidal representation $\pi$ of $\GL_n(\mathbb A_k)$ with central character $\omega$ to be one of the form
\[
\pi:= \pi' \otimes (\norm{\cdot}_{\mathbb A_k}^t\circ \det),
\]
where $\pi'$ is a cuspidal representation of $\GL_n(\mathbb A_k)$ with central character $\omega_0$, as defined above.  From now on, we use the term ``cuspidal representation'' in this sense, with no restriction on the central character unless otherwise specified.

For each cuspidal representation $\pi$, it turns out that $\pi\simeq\bigotimes_v^\prime \pi_v$, which is a restricted tensor product that runs over the places $v$ of $k$.  Each factor $(\pi_v,V_{\pi_v})$ is a complex representation of $\GL_n(k_v)$ which is irreducible and admissible.  Let $\mathcal O_v$ be the ring of integers in $k_v$ and let $K_v=\GL_n(\mathcal O_v)$.  We say that $\pi_v$ is \emph{unramified} if $V_{\pi_v}^{K_v}$ is nontrivial.  For cuspidal $\pi$, one knows $\pi_v$ is unramified for almost every $v$.

Inspired by a theorem of I. Satake, R. Langlands attached to any unramified irreducible admissible complex representation $\pi_v$ of $\GL_n(k_v)$ an unordered $n$-tuple $\bigl\{\beta_{1,v},\beta_{2,v},\dotsc,\beta_{n,v}\bigr\}$ of nonzero complex numbers.  These numbers, called the \emph{Langlands parameters} (or just the \emph{parameters}) of $\pi_v$, determine $\pi_v$ up to isomorphism.  Hence a cuspidal representation $\pi$ determines such an $n$-tuple for all $v$ at which $\pi$ is unramified.

One fact needed below is that, if $\lambda$ is an idele class character and $\pi'$ is a cuspidal representation, then the representation
\[
\pi:=\pi'\otimes(\lambda\circ\det)
\]
is also cuspidal. Furthermore,  if $v$ is a place such that $\lambda_v$ is unramified and $\pi_v'$ is unramified with parameters $\bigl\{\beta_{j,v}\bigr\}$, then $\pi_v$ is also unramified and has parameters $\bigl\{\beta_{j,v}\lambda(\varpi_v)\bigr\}$, where $\varpi_v$ is a uniformizer for $k_v$.

Given $\pi$, it is known (\cite{JS},\cite{JPSS}) that knowledge of the parameters of $\pi_v$ for almost every unramified $v$ is enough to determine $\pi$ up to isomorphism, as long as $\pi$ has unitary central character (which is always true after an appropriate twist by $\norm{\cdot}_{\mathbb A_k}^t$, $t\in\R$):

\begin{thm}[``strong mulitiplicity one"; Jacquet, I. Piatetski-Shapiro, Shalika]
Suppose $\pi_1$ and $\pi_2$ are two cuspidal representations, both with unitary central character, satisfying $\pi_{1,v}\simeq\pi_{2,v}$ for all $v$ outside some finite set $S$ of places of $k$.  Then $\pi_1\simeq\pi_2$.
\end{thm}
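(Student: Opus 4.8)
The plan is to deduce this from the analytic theory of Rankin--Selberg $L$-functions for pairs of cuspidal representations of general linear groups, as developed by Jacquet, Piatetski-Shapiro and Shalika \cite{JPSS} and by Jacquet and Shalika \cite{JS}. The one substantial fact I would take as a black box is the following: given cuspidal representations $\sigma$ of $\GL_n(\mathbb A_k)$ and $\tau$ of $\GL_{n'}(\mathbb A_k)$, both with unitary central character, the completed Rankin--Selberg $L$-function $L(s,\sigma\times\tau)$ admits meromorphic continuation to all of $\C$; each local factor $L_v(s,\sigma_v\times\tau_v)$ depends only on the local components $\sigma_v,\tau_v$ and is the reciprocal of a polynomial in $q_v^{-s}$ with constant term $1$, so that $L_v(s,\sigma_v\times\tau_v)^{-1}$ is entire; and $L(s,\sigma\times\tau)$ is holomorphic at $s=1$ unless $n=n'$ and $\tau\simeq\widetilde\sigma$, the contragredient of $\sigma$, in which case it has a pole there. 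I would also use the Jacquet--Shalika bound $q_v^{-1/2}<\abs{\beta_{j,v}}<q_v^{1/2}$ on the Langlands parameters of a cuspidal representation of $\GL_n(k_v)$ with unitary central character at the unramified places, together with the analogous constraint on the exponents occurring at the ramified places.

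First, observe that the hypothesis $\pi_{1,v}\simeq\pi_{2,v}$ already forces $\pi_1$ and $\pi_2$ to be representations of the same group $\GL_n(\mathbb A_k)$. I would then introduce the auxiliary $L$-function $L(s,\pi_1\times\widetilde{\pi_2})$, where $\widetilde{\pi_2}$ denotes the contragredient of $\pi_2$, again cuspidal with unitary central character. Since forming the contragredient commutes with taking local components, the hypothesis gives $(\widetilde{\pi_2})_v\simeq(\widetilde{\pi_1})_v$ for all $v\notin S$, hence equality of the corresponding local Rankin--Selberg factors, and therefore
\[
L^S(s,\pi_1\times\widetilde{\pi_2})=L^S(s,\pi_1\times\widetilde{\pi_1}).
\]
Now $\widetilde{\pi_1}$ is isomorphic to $\widetilde{\pi_1}$, so $L(s,\pi_1\times\widetilde{\pi_1})$ has a pole at $s=1$; the finitely many Euler factors at the places of $S$ that are removed to pass to $L^S(s,\pi_1\times\widetilde{\pi_1})$ are holomorphic and nonzero at $s=1$ (nonzero because they are reciprocals of polynomials, holomorphic at $s=1$ by the parameter bound), so $L^S(s,\pi_1\times\widetilde{\pi_1})$ still has a pole at $s=1$. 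By the displayed identity, so does $L^S(s,\pi_1\times\widetilde{\pi_2})$. Finally, writing
\[
L^S(s,\pi_1\times\widetilde{\pi_2})=L(s,\pi_1\times\widetilde{\pi_2})\cdot\prod_{v\in S}L_v(s,\pi_{1,v}\times(\widetilde{\pi_2})_v)^{-1}
\]
with each correction factor entire, the pole at $s=1$ must already be carried by the completed $L$-function $L(s,\pi_1\times\widetilde{\pi_2})$. By the pole characterization above, this forces $\widetilde{\pi_2}\simeq\widetilde{\pi_1}$, equivalently $\pi_1\simeq\pi_2$, as desired.

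The genuinely hard part is the black box, not the deduction. Establishing the meromorphic continuation of $L(s,\sigma\times\tau)$, and especially the precise statement that its only possible pole at $s=1$ occurs exactly when $\tau\simeq\widetilde\sigma$, is the content of the Rankin--Selberg integral representation of \cite{JPSS} combined with the nonvanishing theorem of Jacquet and Shalika \cite{JS} on the line $\RE(s)=1$ (which is what yields that $L(s,\sigma\times\tau)$ is entire when $\tau\not\simeq\widetilde\sigma$, and that the pole for $\tau\simeq\widetilde\sigma$ is genuinely present). Once that input is granted, the argument above is purely formal: it records that the order of the pole at $s=1$ of an incomplete Rankin--Selberg $L$-function is determined by the local components of the two representations outside any finite set of places, while the pole at $s=1$ of $L(s,\pi_1\times\widetilde{\pi_2})$ detects precisely the isomorphism $\pi_1\simeq\pi_2$.
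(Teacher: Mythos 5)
The paper does not prove this theorem; it is quoted with attribution to Jacquet, Piatetski-Shapiro, and Shalika and the citations \cite{JS}, \cite{JPSS}, so there is no in-paper argument to compare against. Your Rankin--Selberg argument is correct and is precisely the approach taken in those cited references: compare $L^S(s,\pi_1\times\widetilde{\pi_2})$ with $L^S(s,\pi_1\times\widetilde{\pi_1})$, use the Jacquet--Shalika parameter bounds to see that the omitted local factors neither create nor destroy the pole at $s=1$, and then invoke the pole criterion (a pole at $s=1$ of the completed Rankin--Selberg $L$-function occurs exactly when the second factor is the contragredient of the first) to conclude $\pi_1\simeq\pi_2$.
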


If $\pi_v$ is unramified, define
\[
L_v(\pi,s)=\bigl[(1-\beta_{1,v}q_v^{-s})\dotsm(1-\beta_{n,v}q_v^{-s})\bigr]^{-1}
\]
and let
\[
L^S(\pi,s) = \prod_{v\notin S} L_v(\pi,s)
\]
be the incomplete $L$-function associated to $\pi$, where $S$ is a finite set of places containing those at which $\pi$ is ramified.  Then in \cite{JS} (see Propositions 3.3 and 3.6) the following result is proved:

\begin{thm}[Jacquet, Shalika]\label{JacSha}
Suppose that $\pi$ has unitary central character.  Then $L^S(\pi,s)$ is holomorphic for $\RE(s)>0$ if $\pi$ is not an idele class character of the form $\norm{\cdot}_{\mathbb A}^{it}$, $t\in\R$.
\end{thm}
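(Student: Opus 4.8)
The plan is to split the statement according to whether $n=1$ or $n\ge 2$. For $n=1$ there is essentially nothing to do: a cuspidal representation of $\GL_1(\mathbb A_k)$ with unitary central character is just a unitary idele class character $\chi$, its partial $L$-function $L^S(\chi,s)$ agrees with the Hecke $L$-function $L(\chi,s)$ up to multiplication by the finitely many polynomials $1-\chi(\varpi_v)q_v^{-s}$, $v\in S$, which introduce no poles, and Tate's thesis gives that $L(\chi,s)$ is entire unless $\chi=\norm{\cdot}_{\mathbb A_k}^{it}$ for some $t\in\R$ --- in which case $L(\chi,s)=\zeta_k(s+it)$, whose unique pole sits on the line $\RE(s)=1$. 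So for $n=1$ the assertion is precisely the classical determination of the pole of the Dedekind (Hecke) zeta function.

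For $n\ge 2$ I would prove the stronger fact that $L^S(\pi,s)$ is \emph{entire}. The idea is to realize the standard $L$-function of $\pi$ through a zeta integral attached to a cusp form in the space of $\pi$ --- the Godement--Jacquet integral (formed from a matrix coefficient of $\pi$ and a Schwartz--Bruhat function on $M_n(\mathbb A_k)$) or, equivalently, the Rankin--Selberg integral for the pair $(\pi,\mathbf 1)$. Two facts drive the argument. First, since a cusp form is rapidly decreasing modulo the center on $\GL_n(k)\backslash\GL_n(\mathbb A_k)$ --- in fact, $k$ being a function field, compactly supported modulo the center --- the associated global integral $Z(s,\dots)$ converges absolutely for \emph{every} $s\in\C$ and hence defines an entire function of $s$. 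Second, for factorizable data the integral unfolds into an Euler product whose local factor at each $v\notin S$ is exactly $L_v(\pi,s)$, so that
\[
Z(s,\dots)=L^S(\pi,s)\cdot\prod_{v\in S}Z_v(s,\dots).
\]
Choosing the local data at the finitely many $v\in S$ so that $\prod_{v\in S}Z_v(s,\dots)$ is a nonzero constant --- which can be arranged by supporting the local vectors in a small enough neighborhood of the identity --- exhibits $L^S(\pi,s)$ as a scalar multiple of the entire function $Z(s,\dots)$, hence entire. (Equivalently: by the Godement--Jacquet functional equation $L(\pi,s)$ is entire for $n\ge 2$, and the ramified local factors $L_v(\pi_v,s)^{-1}$, $v\in S$, are polynomials in $q_v^{-s}$ --- the Langlands parameters of the unitary generic $\pi_v$ being finite and nonzero, indeed bounded by $q_v^{-1/2}<\abs{\beta_{j,v}}<q_v^{1/2}$ --- so $L^S(\pi,s)=L(\pi,s)\prod_{v\in S}L_v(\pi_v,s)^{-1}$ is entire.)

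The main obstacle is the global input of the previous paragraph: the construction of the integral representation and the consequent analytic continuation of the standard $L$-function of a cuspidal representation of $\GL_n$, $n\ge 2$ --- the one genuinely hard ingredient (Godement--Jacquet, respectively Jacquet--Piatetski-Shapiro--Shalika). Everything else --- the $n=1$ case, the unfolding, and the bookkeeping at the finitely many ramified places --- is routine by comparison, the only mildly delicate point being the choice of local test data (or the unitarity bounds on the parameters) needed to rule out spurious poles for $\RE(s)>0$. I note finally that the application to Theorem~2.1 also uses the \emph{non-vanishing} of $L^S(\pi,1)$ for non-exceptional $\pi$; this is not part of the statement above, but follows from the Rankin--Selberg method applied to $\pi\times\widetilde\pi$, whose partial $L$-function has non-negative Dirichlet coefficients for real $s>1$ and a simple pole at $s=1$ --- data that feeds a Hadamard--de la Vall\'ee Poussin-style positivity argument.
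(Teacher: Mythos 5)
This theorem is not proved in the paper at all; it is quoted from Jacquet and Shalika \cite{JS} (their Propositions 3.3 and 3.6), so there is no internal proof to compare yours against. Your sketch is nonetheless a reasonable account of why the result holds, but two remarks are in order. First, for $n\geq 2$, the claim that ``the Godement--Jacquet integral converges absolutely for every $s$'' because cusp forms are compactly supported mod center is not quite right: that integral is taken over $\GL_n(\mathbb A_k)$ against a \emph{matrix coefficient} of $\pi$ and a Schwartz--Bruhat function on $M_n(\mathbb A_k)$, and a matrix coefficient of a cuspidal automorphic representation is not compactly supported modulo the center even over a function field --- its support involves $\GL_n(k)$-translates of the cusp form's compact support --- so one does not get everywhere-convergence for free; Godement and Jacquet obtain the continuation by a Poisson-summation split in the style of Tate's thesis. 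The argument you have in mind works cleanly for the Jacquet--Piatetski-Shapiro--Shalika $\GL_n\times\GL_m$ integral with $m<n$, which integrates a genuine cusp form over a quotient of $\GL_m(\mathbb A_k)$ and therefore does converge for all $s$; and Jacquet--Shalika themselves argue via Rankin--Selberg theory for $\GL_n\times\GL_m$, not Godement--Jacquet. Second, your closing observation is apt and perhaps the most valuable part of the proposal: Corollary 3.3 asserts $-\ord_{s=1}L^S(\pi,s)=0$ for nontrivial $\pi$ of unitary central character, and this requires \emph{nonvanishing} at $s=1$ in addition to holomorphy, while Theorem 3.2 as stated supplies only the latter. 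The paper is thus silently invoking the nonvanishing part of the Jacquet--Shalika results as well, exactly as you note.
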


On the other hand, when $\pi=\norm{\cdot}_{\mathbb A}^{it}$, so that $n=1$ and $\pi_v$ is unramified everywhere, we have $\beta_{1,v}=q_v^{-it}$ for all $v$.  Hence in this case, $L^S(\pi,s)$ is  the translated Dedekind zeta function $\zeta_k(s+it)$ of $k$ (divided by a finite number of Euler factors if $S\neq\emptyset$), which is holomorphic in $\C$ except for a simple pole at $s=1-it$.  In particular, we have

\begin{cor}\label{ord}
Suppose that $\pi$ has unitary central character.  Then
\[
-\ord_{s=1}L^S(\pi,s)=\begin{cases} 
1&\text{if } \pi \text{ trivial}\\ 
0&\text{if } \pi \text{ nontrivial}\end{cases}. \tag{3a}
\]
\end{cor}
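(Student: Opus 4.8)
The plan is to distinguish the cases $\pi$ trivial and $\pi$ nontrivial, and in the latter to subdivide according to whether or not $\pi$ has the special form $\norm{\cdot}_{\mathbb A}^{it}$. In these three cases the order of $L^S(\pi,s)$ at $s=1$ is to be read off, respectively, from the description of $L^S(\pi,s)$ as an incomplete Dedekind zeta function, from the analogous description for a translate of $\zeta_k$, and from Theorem~\ref{JacSha} together with one further analytic input. The only step that is not essentially formal will be the non-vanishing of $L^S(\pi,s)$ at $s=1$ in the last case; I will take this from the work of Jacquet and Shalika.

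Suppose first that $\pi$ is trivial. Then necessarily $n=1$, every $\pi_v$ is unramified with parameter $\beta_{1,v}=1$, and hence
\[
L^S(\pi,s)=\prod_{v\notin S}(1-q_v^{-s})^{-1}=\zeta_k(s)\cdot\prod_{v\in S}(1-q_v^{-s}).
\]
The finite product on the right is entire, and at $s=1$ its value $\prod_{v\in S}(1-q_v^{-1})$ is nonzero; thus $L^S(\pi,s)$ has the same order at $s=1$ as $\zeta_k(s)$, namely that of a simple pole, and $-\ord_{s=1}L^S(\pi,s)=1$.

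Now suppose $\pi$ is nontrivial. If $\pi=\norm{\cdot}_{\mathbb A}^{it}$ with $t\in\R$, then nontriviality forces $t\notin(2\pi/\log q)\Z$ (these being the values of $t$ for which this character is trivial, since the adelic norm on $\mathbb A_k^\times/k^\times$ takes its values in $q^{\Z}$). As recorded just before the corollary, $L^S(\pi,s)$ is then $\zeta_k(s+it)$ up to a harmless finite product of Euler factors whose value at $s=1$ is again nonzero; this $L$-function is holomorphic at $s=1$, since its only pole, at $s=1-it$, is different from $s=1$ because $t\notin(2\pi/\log q)\Z$. Moreover $\zeta_k(1+it)\neq 0$ because $\zeta_k$ does not vanish on the line $\RE(s)=1$ (equivalently, by Weil's Riemann Hypothesis for curves, all its zeros lie on $\RE(s)=1/2$). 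Hence $\ord_{s=1}L^S(\pi,s)=0$.

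Finally, suppose $\pi$ is nontrivial and not of the form $\norm{\cdot}_{\mathbb A}^{it}$. Then Theorem~\ref{JacSha} shows $L^S(\pi,s)$ is holomorphic for $\RE(s)>0$, in particular at $s=1$, so $\ord_{s=1}L^S(\pi,s)\geq 0$. To obtain equality I would invoke the non-vanishing of $L^S(\pi,s)$ on the line $\RE(s)=1$ for cuspidal $\pi$ with unitary central character, the function-field analogue of the classical non-vanishing underlying the prime number theorem and, like Theorem~\ref{JacSha}, a result of Jacquet and Shalika \cite{JS} (for the sub-case $n=1$ one could instead use the Riemann Hypothesis for curves). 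This gives $L^S(\pi,1)\neq 0$, hence $\ord_{s=1}L^S(\pi,s)=0$ and $-\ord_{s=1}L^S(\pi,s)=0$, as claimed. The main obstacle is precisely this last non-vanishing assertion; once it is granted, the proof is a short case check resting on the explicit form of $L^S(\pi,s)$ and on Theorem~\ref{JacSha} as stated.
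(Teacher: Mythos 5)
Your proof follows the same case analysis as the paper (trivial $\pi$ via $\zeta_k$; nontrivial $\norm{\cdot}_{\mathbb A}^{it}$ via the translated $\zeta_k(s+it)$; all remaining $\pi$ via Theorem~\ref{JacSha}), and it is correct. The one point on which you are more explicit than the paper is the non-vanishing of $L^S(\pi,s)$ at $s=1$, which is genuinely needed to pass from $\ord_{s=1}L^S(\pi,s)\geq 0$ (mere holomorphy) to $\ord_{s=1}L^S(\pi,s)=0$ in the nontrivial cases; the paper absorbs this into its citation of \cite{JS} without comment, and your attribution of the non-vanishing on $\RE(s)=1$ to Jacquet and Shalika is the right reference.
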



\section{$\ell$-adic representations and the Langlands correspondence over $k$ for $\GL_n$}\label{ladic_sec}

Lafforgue's result pairs each irreducible $\ell$-adic Galois representation with a cuspidal representation.  We will describe the objects on the first side more explicitly, and then describe the correspondence.  The survey \cite{Lau} is a good reference for this material, notably \S1.2 and \S1.3.  We then give an easy extension of this result.

For any $n\geq 1$, we will define an \emph{$n$-dimensional $\ell$-adic representation} of $\Gamma_k$ to be a continuous homomorphism $\sigma_\ell\colon\Gamma_k\to \Aut_{\bar\Q_\ell}(M)$ for some finite-dimensional vector space $M$ over $\bar\Q_\ell$.  Let $\mathcal G_n'$ denote a system of representatives for the isomorphism classes of \emph{irreducible} $n$-dimensional $\ell$-adic representations $\sigma_\ell$ of $\Gamma_k$ which satisfy the following three additional properties:
\begin{enumerate}
\item[(i)] There is a basis of $M$ such that, when using this basis to identify $\Aut_{\bar\Q_\ell}(M)$ with $\GL_n(\bar\Q_\ell)$, one has $\sigma_\ell(\Gamma_k)\subseteq \GL_n(E)$ for some finite extension $E\subseteq\bar\Q_\ell$ of $\Q_\ell$.
\item[(ii)] There are only a finite number of places $v$ of $k$ at which $\sigma_\ell$ is ramified, in the sense described in \S\ref{prelim_sec}.
\item[(iii)] The character $\det \sigma_\ell$ is of finite order.
\end{enumerate}

At this point, we fix once and for all an isomorphism $\iota\colon\bar \Q_\ell \to \C$. To any such $\sigma_\ell$ we can assign an incomplete $L$-function $L^S(\sigma_\ell,s)$, for a finite set $S$ containing the ramified places of $\sigma_\ell$, in exactly the same manner as in \S\ref{prelim_sec}: for $v\notin S$, set
\[
\label{galoislocalfactor}
L_v(\sigma_\ell,s)=\det\bigl(1-q_v^{-s}\sigma_\ell(Fr_v)\bigr)^{-1} \tag{4a}
\]
and then set
\[
L^S(\sigma_\ell,s)=\prod_{v\notin S} L_v(\sigma_\ell,s).
\]
Thanks to the isomorphism $\iota$, we view this as a complex $L$-function.

Let $\mathcal A_n'$ denote a system of representatives for the isomorphism classes of cuspidal representations of $\GL_n(\mathbb A_k)$ with \emph{finite order} central character.  Then the following \emph{global Langlands correspondence for $\GL_n$} was proved for the case $n=2$ by V. Drinfeld \cite{Dr1},\cite{Dr2} and later extended to all cases $n>2$ by Lafforgue \cite{Laf} (with the case $n=1$ following from class field theory for $k$):

\begin{thm}[Lafforgue]\label{Lafforgue}
There is a unique bijection $\mathcal G_n'\to\mathcal A_n'$, $\sigma_\ell\mapsto\pi$, such that for almost every place $v$ at which $\sigma_\ell$ and $\pi$ are unramified,
\[
L_v(\sigma_\ell,s)=L_v(\pi,s).
\]
\end{thm}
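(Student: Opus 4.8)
The plan is to separate the statement into its two logically independent halves: the \emph{existence} of a bijection $\mathcal G_n'\to\mathcal A_n'$ enjoying the stated local $L$-factor compatibility, and its \emph{uniqueness}. Existence --- that to each $\sigma_\ell\in\mathcal G_n'$ one can attach some cuspidal $\pi\in\mathcal A_n'$ with $L_v(\sigma_\ell,s)=L_v(\pi,s)$ for almost every $v$ at which both are unramified, and that the resulting assignment is onto $\mathcal A_n'$ --- is precisely the deep theorem of Drinfeld (for $n=2$) and of Lafforgue (for $n>2$), the case $n=1$ being global class field theory for $k$. I would make no attempt to reprove this; it is cited as a black box from \cite{Laf} (together with \cite{Dr1}, \cite{Dr2}). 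The only part for which a short argument is appropriate, using only what has already been assembled, is uniqueness, and that is where I would work.

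For uniqueness it is enough to show that a cuspidal $\pi$ with $L_v(\sigma_\ell,s)=L_v(\pi,s)$ for all $v$ outside some finite set is determined up to isomorphism by $\sigma_\ell$: if two bijections $f$ and $g$ both satisfy the compatibility condition, then $f(\sigma_\ell)$ and $g(\sigma_\ell)$ have matching $L$-factors at all but finitely many $v$, hence (by the claim just made) are isomorphic, hence equal as elements of the fixed system of representatives $\mathcal A_n'$, so $f=g$. To prove the claim, suppose $\pi,\pi'\in\mathcal A_n'$ satisfy $L_v(\pi,s)=L_v(\pi',s)$ for all $v$ outside a finite set $S'$, enlarged to contain also the places where $\pi$ or $\pi'$ is ramified. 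For $v\notin S'$, the equality of Euler factors $\prod_{j}(1-\beta_{j,v}q_v^{-s})^{-1}=\prod_{j}(1-\beta'_{j,v}q_v^{-s})^{-1}$, read as an identity of rational functions in $q_v^{-s}$, forces the unordered tuples of Langlands parameters $\{\beta_{j,v}\}$ and $\{\beta'_{j,v}\}$ to coincide, since these are the reciprocals, with multiplicity, of the roots of the common denominator polynomial. As the Langlands parameters determine an unramified irreducible admissible representation of $\GL_n(k_v)$ up to isomorphism, $\pi_v\cong\pi'_v$ for all $v\notin S'$. Finally, lying in $\mathcal A_n'$ means $\pi$ and $\pi'$ have finite-order, hence unitary, central character, so the strong multiplicity one theorem of Jacquet, Piatetski-Shapiro and Shalika recalled above applies and gives $\pi\cong\pi'$.

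The genuine difficulty, were one to insist on a fully self-contained proof, lies entirely on the existence side: there is no shortcut around Lafforgue's construction via the cohomology of moduli of Drinfeld shtukas. For the purposes of this paper, however, that is a legitimate citation rather than an obstacle, and the real content of the argument above is modest bookkeeping --- that the local $L$-factor faithfully records the Satake/Langlands parameters, and that the finite-order central character built into the definition of $\mathcal A_n'$ places one exactly in the range where strong multiplicity one applies, once the finitely many exceptional places on both sides have been absorbed into $S'$.
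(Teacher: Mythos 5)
Your division of labor is exactly right, and it mirrors what the paper implicitly does: Theorem~\ref{Lafforgue} is presented as a citation of Drinfeld and Lafforgue, with no proof in the paper itself, so the only portion a reader could reasonably be asked to supply is uniqueness. Your uniqueness argument is correct and complete. The one point worth highlighting is that it works as cleanly as it does precisely because $\mathcal{A}_n'$ is restricted to finite-order central characters, which are automatically unitary, so strong multiplicity one applies with no further fuss. Compare this with the paper's proof of Corollary~\ref{easycor}, which must establish uniqueness of the extended bijection $\mathcal{G}_n\to\mathcal{A}_n$ where central characters need not be unitary: there the author has to observe that agreement of the unramified local components at a single place already pins down $\RE(z)$ for the exponent of the central character, twist both representations by $\norm{\cdot}_{\mathbb{A}_k}^{-\RE z}\circ\det$ to land in the unitary setting, apply strong multiplicity one there, and untwist. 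Your argument is the degenerate case of that one, with the twist trivial. So you have correctly isolated the easy half, proved it by essentially the paper's own mechanism, and correctly flagged the existence half as the deep content that must be black-boxed.

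One small stylistic remark: when you say ``equal as elements of the fixed system of representatives $\mathcal{A}_n'$,'' this is the right way to convert ``isomorphic'' into ``equal'' so that the two bijections genuinely agree as set maps; it is worth being that explicit, and you were.
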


We now discuss how to extend this theorem to the case where the ``finite order" restrictions are removed from the definitions of $\mathcal G_n'$ and $\mathcal A_n'$.  This extension is something which is presumably well-known to experts, but does not seem to be written down.  The key ingredient is the description of unramified (Galois and idele class) characters for $k$ given by class field theory in the function field setting.

Define $\mathcal A_n$ to be a system of representatives for the isomorphism classes of cuspidal representations of $\GL_n(\mathbb A_k)$ (with no restriction on the central character).  Also let $\mathcal G_n$ be defined exactly as $\mathcal G_n'$ above, but without condition (iii), and define $L_v(\sigma_\ell,s)$ using (\ref{galoislocalfactor}) if $\sigma_\ell\in\mathcal G_n$ is unramified at $v$.  Then we have the following:

\begin{cor}\label{easycor}
There is a unique bijection $\mathcal G_n\to\mathcal A_n$, $\sigma_\ell\mapsto\pi$, such that for almost every place $v$ at which $\sigma_\ell$ and $\pi$ are unramified,
\[
\label{localequality}
L_v(\sigma_\ell,s)=L_v(\pi,s). \tag{4b}
\]
\end{cor}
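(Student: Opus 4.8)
The plan is to bootstrap from Theorem \ref{Lafforgue} by twisting. Given $\sigma_\ell \in \mathcal G_n$, the character $\det\sigma_\ell$ need not be of finite order, but by class field theory for $k$ we have a description of $\Gamma_k^{\mathrm{ab}}$: the idele class group $\mathbb A_k^\times/k^\times$ has, modulo the norm-one subgroup (which is compact and on which characters have finite order up to a twist), a free part of rank one generated by an idele of degree one. Concretely, there is an everywhere-unramified idele class character $\nu$ of $k$ with $\nu(\varpi_v) = q_v$ for every $v$; via class field theory this corresponds to an everywhere-unramified Galois character $\chi_0\colon\Gamma_k\to\bar\Q_\ell^\times$ sending $Fr_v\mapsto q_v$ (after applying $\iota^{-1}$), and the point is that the group of continuous characters of $\Gamma_k$ is generated by the finite-order characters together with the powers $\chi_0^z$, $z\in\C$ (using $q^z$ to make sense of a complex power, as $\det\sigma_\ell$ takes values in a fixed finitely generated group). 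Hence there is a complex number $t$ such that $(\det\sigma_\ell)\cdot\chi_0^{-t/1}$—more precisely, $\det\sigma_\ell$ twisted by an appropriate unramified character killing the infinite part—has finite order; then replacing $\sigma_\ell$ by $\sigma_\ell\otimes\chi_0^{s}$ for a suitable $s$ yields a representation $\sigma_\ell'\in\mathcal G_n'$.

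First I would make this precise: show that for any $\sigma_\ell\in\mathcal G_n$ there is a unique real number $t$ and a unique $\sigma_\ell'\in\mathcal G_n'$ with $\sigma_\ell\simeq\sigma_\ell'\otimes\chi_0^{t}$, where $\chi_0^{t}$ denotes the unramified character $Fr_v\mapsto q_v^{t}$ (well-defined since all relevant quantities live in $\R$, or after $\iota$). Uniqueness of $t$ follows because the parameters scale: at an unramified $v$, twisting by $\chi_0^{t}$ multiplies the eigenvalues of $\sigma_\ell(Fr_v)$ by $q_v^{t}$, and irreducibility plus property (iii) for $\sigma_\ell'$ pins down $t$. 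On the automorphic side, recall from \S3 that any cuspidal $\pi\in\mathcal A_n$ is uniquely of the form $\pi'\otimes(\|\cdot\|_{\mathbb A_k}^{t}\circ\det)$ with $\pi'$ having finite-order (indeed unitary, and one checks finite-order) central character—this is exactly the decomposition $\omega=\omega_0\|\cdot\|^t$ applied to central characters, together with the fact that a unitary central character on the degree-zero ideles extends to a finite-order character precisely when we are in $\mathcal A_n'$. So $\pi\mapsto(\pi',t)$ is a bijection $\mathcal A_n\leftrightarrow\mathcal A_n'\times\R$, matching the bijection $\sigma_\ell\mapsto(\sigma_\ell',t)$ for $\mathcal G_n$.

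Then I would define the map $\mathcal G_n\to\mathcal A_n$ by sending $\sigma_\ell=\sigma_\ell'\otimes\chi_0^{t}$ to $\pi:=\pi'\otimes(\|\cdot\|_{\mathbb A_k}^{t}\circ\det)$, where $\sigma_\ell'\mapsto\pi'$ under Theorem \ref{Lafforgue}. The local compatibility \eqref{localequality} is then immediate at almost every unramified $v$: by Theorem \ref{Lafforgue} the parameters of $\pi_v'$ are the eigenvalues of $\sigma_\ell'(Fr_v)$, and twisting by $\chi_0^{t}$ resp.\ $\|\cdot\|_{\mathbb A_k}^{t}\circ\det$ multiplies both by $q_v^{-t}$ (using $\|\varpi_v\|_{\mathbb A_k}=q_v^{-1}$), so $L_v(\sigma_\ell,s)=L_v(\sigma_\ell',s+t)=L_v(\pi',s+t)=L_v(\pi,s)$. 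Bijectivity is inherited from Theorem \ref{Lafforgue} and the product decompositions above. For uniqueness of the bijection in Corollary \ref{easycor}: if two cuspidal representations of $\GL_n$ agree at almost all $v$ after the same unitary twist, strong multiplicity one (Theorem 3.1) forces them equal; combined with the uniqueness in Theorem \ref{Lafforgue} this gives uniqueness here.

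The main obstacle I anticipate is the bookkeeping around the twist parameter—specifically, verifying that the ``infinite part'' of a continuous character of $\Gamma_k$ (equivalently of $\mathbb A_k^\times/k^\times$) is exactly one-dimensional and spanned by $\chi_0$, so that every $\sigma_\ell\in\mathcal G_n$ really does become finite-order-determinant after a unique real twist, and that on the automorphic side the resulting $\pi'$ lands in $\mathcal A_n'$ (i.e.\ has genuinely finite-order, not merely unitary, central character). This is where class field theory for the function field $k$—in particular the structure $\mathbb A_k^\times/k^\times\cong \mathbb Z\times(\text{compact})$ after fixing a degree-one place or idele—does the real work, and it is the only place the function-field hypothesis enters beyond Theorem \ref{Lafforgue} itself.
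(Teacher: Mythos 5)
Your strategy (twist by an unramified character to land in $\mathcal G_n'$, apply Theorem \ref{Lafforgue}, twist back, use strong multiplicity one for uniqueness) is exactly the paper's strategy. But the specific decomposition you propose has a genuine gap, and it is not merely the ``bookkeeping'' you flag at the end.

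The issue is your restriction to \emph{real} twist parameters. You claim every $\pi\in\mathcal A_n$ is uniquely $\pi'\otimes(\norm{\cdot}_{\mathbb A_k}^t\circ\det)$ with $t\in\R$ and $\pi'\in\mathcal A_n'$, reasoning that the $\pi'$ coming from the decomposition $\omega=\omega_0\norm{\cdot}^t$ ($\omega_0$ unitary, $t\in\R$) has finite-order central character. This conflates ``unitary'' with ``finite order.'' In the function field case a unitary idele class character decomposes as $\omega_f\cdot\norm{\cdot}^{i\theta}$ with $\omega_f$ finite order and $\theta\in\R$; only the restriction to the degree-zero ideles is forced to be finite order (that subgroup is compact and totally disconnected), but the $\norm{\cdot}^{i\theta}$ piece generally is not. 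So the real-$t$ decomposition lands you in the unitary-central-character world, not in $\mathcal A_n'$, and your proposed bijection $\mathcal A_n\leftrightarrow\mathcal A_n'\times\R$ is false. The same problem appears on the Galois side: there is in general no $wn$-th root $z$ of the value determining $\det\sigma_\ell$ (up to finite order) for which $\iota(z)$ is a positive real number, so ``$\sigma_\ell\simeq\sigma_\ell'\otimes\chi_0^t$ with $t\in\R$'' has no reason to hold, and your claimed \emph{unique} real $t$ certainly does not exist.

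The paper avoids this entirely. It proves Lemma \ref{charlem} (some power of $\det\sigma_\ell$ is everywhere unramified), chooses an arbitrary $z\in\bar\Q_\ell$ with $z^{wn}=(\det\sigma_\ell)^w(1)$, and twists by the unramified $\ell$-adic character $\lambda_\ell$ with $\lambda_\ell(1)=z$ — with $z$, hence $\lambda_\ell$ and the corresponding idele class character $\lambda$, allowed to be completely general, not required to be ``real'' or even unitary. The price is that $z$ is only defined up to a $wn$-th root of unity, so the resulting map is a priori ambiguous; the paper then has to verify well-definedness (via strong multiplicity one applied to the two candidate twists). Your approach tries to buy canonicity by pinning down $t$, but the canonicity isn't there, and what you actually need is the paper's well-definedness check. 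To repair your argument you would have to replace ``real $t$'' with ``general unramified twist,'' at which point you are doing precisely what the paper does, Lemma \ref{charlem} included, and the ``unique $t$'' claim must be dropped in favor of the explicit well-definedness argument.
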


(We note that it is this bijection which is stated in the papers of Drinfeld.  The finite-order assumptions are only present in Lafforgue's work, and are not serious obstacles, as this corollary demonstrates.)

Before getting to the proof of this corollary, we need the following result:

\begin{lem}\label{charlem}
Let $E$ be a finite extension of $\Q_\ell$ and let $\chi\colon\Gamma_k\to E^\times$ be a continuous character.  Then there is a finite power of $\chi$ which is unramified everywhere.
\end{lem}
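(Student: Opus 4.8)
The plan is to use class field theory for $k$ to identify continuous characters of $\Gamma_k$ with unitary (or at least continuous) idele class characters, and then to exploit the very restrictive structure of the idele class group of a global function field — namely that $\mathbb{A}_k^\times/k^\times$ has a profinite part (whose characters are all of finite order) together with a rank-one free part coming from the degree map. First I would recall that the reciprocity map of class field theory gives a continuous isomorphism between the profinite completion of $\mathbb{A}_k^\times/k^\times$ and $\Gamma_k^{\mathrm{ab}}$, under which $\chi$ corresponds to a continuous character $\tilde\chi$ of $\mathbb{A}_k^\times/k^\times$ with values in $E^\times$. The point is that the degree map $\deg\colon \mathbb{A}_k^\times/k^\times \to \mathbb{Z}$ (sending an idele to the degree of the associated divisor) is surjective with kernel $C^0$, the group of degree-zero idele classes, and $C^0$ is a profinite group: it is an extension of a finite group (a quotient of the divisor class group of degree zero, which is finite) by the compact group $\prod_v \mathcal{O}_v^\times$ modulo global units.

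The key step is then: any continuous homomorphism from a profinite group to $E^\times$ has open kernel, hence finite image, because $E^\times$ has a neighborhood basis of the identity consisting of subgroups (the groups $1 + \mathfrak{p}_E^j$) that contain no nontrivial finite-order... wait, more carefully: $E^\times$ contains no small subgroups, i.e.\ there is an open neighborhood $U$ of $1$ in $E^\times$ containing no nontrivial subgroup of $E^\times$. Since $\chi(C^0)$ is compact and its intersection with $U$ must be trivial, $\chi(C^0)$ is a compact subgroup meeting $U$ trivially, hence finite; so $\chi|_{C^0}$ has finite order, say $N_0$. Therefore $\chi^{N_0}$ is trivial on $C^0$, meaning $\chi^{N_0}$ factors through $\Gamma_k^{\mathrm{ab}} \to \mathbb{Z}$ (the degree map), i.e.\ it is "unramified everywhere" in the sense that it is trivial on all inertia subgroups, since the inertia at $v$ lands in the image of $\mathcal{O}_v^\times \subseteq C^0$ under reciprocity. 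Actually one should check that being trivial on $C^0$ is exactly the statement that the corresponding Galois character is unramified at every place, which follows because local class field theory identifies $I_v$ with (the image of) $\mathcal{O}_v^\times$.

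A cleaner way to phrase the conclusion: the unramified-everywhere characters of $\Gamma_k$ correspond precisely to characters of $\Gamma_k^{\mathrm{ab}}$ that factor through $\Gamma_k^{\mathrm{ab}} \twoheadrightarrow \widehat{\mathbb{Z}} = \Gal(\bar{\mathbb{F}}_q/\mathbb{F}_q)$, i.e.\ they are generated by the one sending geometric Frobenius to a fixed element of $E^\times$. So after replacing $\chi$ by a finite power that kills the (finite) "ramified part" living in the profinite group $C^0$, we land in this one-parameter family, which is by definition unramified everywhere. The only mild subtlety — and the step I would be most careful about — is confirming that the continuous character $\tilde\chi$ on $\mathbb{A}_k^\times/k^\times$ genuinely has image in a locally profinite (in fact $\ell$-adic) group with no small subgroups; this is immediate for $E^\times$ with $E/\Q_\ell$ finite, but one must make sure the compactness of $C^0$ is used correctly and that no archimedean-type phenomenon (absent here, since $k$ is a function field and there are no archimedean places) sneaks in. There is no archimedean place, so there is no $\R_{>0}$-factor to worry about, which is precisely why the finite-power statement holds — in the number field case the analogous statement would fail.
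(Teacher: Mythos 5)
Your overall route — pass to the idele class group via reciprocity, show $\chi$ has finite order on the degree-zero part $C^0$, and conclude that a suitable power factors through the degree map $\deg\colon\mathbb{A}_k^\times/k^\times\to\mathbb{Z}$ and is therefore everywhere unramified — is a legitimate alternative to the paper's argument, which instead works locally at each place by showing directly that $\chi(I_v)\subseteq\mu_E$. But your justification of the key step fails, and the failure is not cosmetic.

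You assert that $E^\times$ has ``no small subgroups,'' i.e.\ an open neighborhood of $1$ containing no nontrivial subgroup, and use this to deduce that the compact subgroup $\chi(C^0)$ is finite. This is false for a non-archimedean local field: the groups $1+\mathfrak{p}_E^j$ are nontrivial (open, compact, infinite) subgroups of $E^\times$ and form a neighborhood basis of $1$, so \emph{every} neighborhood of $1$ contains a nontrivial subgroup. Consequently a compact subgroup of $E^\times$ need not be finite — $\mathcal{O}_E^\times$ itself is one — and a continuous character from a profinite group to $E^\times$ can have infinite image (e.g.\ $\mathbb{Z}_\ell\hookrightarrow 1+\mathfrak{p}_E$). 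A telling symptom is that your argument never invokes $\ell\neq\mathrm{char}\,k$; yet the lemma is genuinely false when $\ell=p$, since a wildly ramified $\ell$-adic character then has no unramified power. Any correct proof must use this coprimality.

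To repair your approach you would need to open up the structure of both sides: write $\mathcal{O}_E^\times\simeq\mu_E\times(1+\mathfrak{p}_E)$ with $1+\mathfrak{p}_E$ pro-$\ell$ and torsion-free, observe that $C^0$ is built from $\prod_v\mathcal{O}_v^\times$ (each $\mathcal{O}_v^\times\simeq\mu_{q_v-1}\times(1+\mathfrak{p}_v)$ with $1+\mathfrak{p}_v$ pro-$p$) together with the finite group $\mathrm{Pic}^0$, note that only finitely many local components $\chi_v$ are nontrivial, and then use $p\neq\ell$ to force each $\chi_v$ to land in the finite group $\mu_E$. At that point you have effectively reproduced the paper's proof, which carries out exactly this comparison of pro-$p$ inertia against the decomposition $\mathcal{O}_E^\times=\mu_E\times\mathcal{O}_E$ and concludes $\chi^{\ell^r-1}$ is unramified. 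So while your reorganization (global $C^0$ instead of local $I_v$) is a fine expository variant, the ``no small subgroups'' shortcut does not exist, and the honest version of the key step is the same one the paper uses.
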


We remark that this statement is false for number fields, due mainly to the presence of archimedean places.  (See \S\ref{CM_sec} for more details.)

\medskip

\noindent\emph{Proof of \ref{charlem}.} By compactness of $\Gamma_k$, we may assume $\chi$ takes values in $\mathcal O_E^\times\subseteq E^\times$ by changing basis (\cite{Se}, p.1).  We have an isomorphism
\[
\mathcal O_E^\times \simeq \mu_E \times \mathcal O_E,
\]
where $\mu_E$ is the group of roots of unity in $E$.  If $\ell^r$ is the cardinality of the residue field of $E$, then $\mu_E$ is cyclic of order $\ell^r-1$, while $\mathcal O_E$ is a pro-$\ell$ group.  

Now let $v$ be any place of $k$.  By local class field theory, if $I_v$ is the inertia subgroup of any decomposition group $D_v \subseteq \Gamma_k$ of $v$, then the image of $I_v$ in the abelianization $\Gamma_k^{ab}$ of $\Gamma_k$ is the product of a finite cyclic group and a pro-$p$ group, where $p=\text{char }k\neq \ell$.  Since $\chi$ factors through $\Gamma_k^{ab}$, this forces $\chi(I_v)\subseteq\mu_E\times\set{0}$ and shows that $\chi^{\ell^r-1}$ is unramified. \qed

\medskip

\noindent\emph{Proof of \ref{easycor}.} Pick any $\sigma_\ell\in\mathcal G_n$, and suppose, by (i), that $\sigma_\ell$ takes values in $\GL_n(E)$ for a finite extension $E$ of $\Q_\ell$.

The character $\chi=\det\sigma_\ell$ is continuous and takes values in $E^\times$, so by the lemma we pick $w\in\Z$ such that $\chi^w$ is unramified.  By global class field theory for $k$ (see \cite{AT}, p.76), this means that $\chi^w$ factors through $\Gal(k\bar\F_q/k)\simeq\Gal(\bar \F_q/\F_q)\simeq \hat\Z$ (recall that $\F_q$ is the constant field of $k$) and is completely determined by the image of $1\in\hat\Z$.  Denoting this element as $\chi^w(1)$ by abuse of notation, we choose some $z\in\bar\Q_\ell$ such that $z^{wn}=\chi^w(1)$.

Let $\lambda_\ell\colon\Gamma_k\to E(z)^\times$ be the unique unramified character such that, again by abuse of notation, $\lambda_\ell(1)=z$ and thus $\lambda_\ell^{wn}=\chi^w$.  By global class field theory, $\lambda_\ell$ corresponds to an unramified idele class character $\lambda\colon\mathbb A_k^\times/k^\times\to\C^\times$, in the sense that $\lambda_\ell(Fr_v)=\lambda(\varpi_v)$ for all $v$, where $\varpi_v$ is a uniformizer of $k_v$.  (Note that this is the opposite convention of that in \cite{AT}, since $Fr_v$ is the geometric Frobenius.  Also recall we have identified $\bar \Q_\ell$ with $\C$ via the fixed isomorphism $\iota$.)

Since $\sigma_\ell$ is unramified almost everywhere, $\sigma_\ell\otimes\lambda_\ell^{-1}$ is a continuous representation of $\Gamma_k$ which also is unramified almost everywhere and that takes values in $\GL_{n}(E(z))$.  Furthermore,
\[
(\det(\sigma_\ell \tensor \lambda_\ell^{-1}))^{w}=(\chi\lambda_\ell^{-n})^w=1,
\]
i.e., the determinant of $\sigma_\ell\otimes\lambda_\ell^{-1}$ has finite order.  Thus Theorem \ref{Lafforgue} gives a unique cuspidal representation $\pi'$, with central character of finite order, such that
\[\label{4c}
L_v(\pi',s)=L_v(\sigma_\ell\otimes \lambda_\ell^{-1},s)\tag{4c}
\]
for almost all $v$.  

Let $S$ be a finite set of places containing those for which (\ref{4c}) does not hold, as well as the ramified places of $\pi'$ and $\sigma_\ell$.  For $v\notin S$, (\ref{4c}) means that the parameters $\bigl\{\beta_{j,v}\bigr\}$ of $\pi_{v}'$ coincide with the eigenvalues of
\[
\bigl(\sigma_\ell\otimes\lambda_\ell^{-1}\bigr)(Fr_v)=\sigma_\ell(Fr_v)\lambda_\ell(Fr_v)^{-1}.
\]
This implies that the parameters $\bigl\{\beta_{j,v}\lambda(\varpi_v)\bigr\}$ of the cuspidal representation $\pi=\pi'\tensor(\lambda\circ\det)$ coincide with the set of eigenvalues of $\sigma_\ell(Fr_v)$, and therefore that
\[
L_v(\pi,s)=L_v(\sigma_\ell,s)
\]
for $v\notin S$.

Let us denote this construction of $\pi$ from $\sigma_\ell$ as $r_n\colon\sigma_\ell\mapsto\pi$.  We have verified that almost all local $L$-factors of $\sigma_\ell$ and $\pi$ agree, as required in the statement of the corollary.  We now verify that $r_n$ satisfies the other necessary properties.

\emph{$r_n$ is well-defined:} The only potential ambiguity in our construction is the choice of $z$ such that $z^{wn}=\chi^w(1)$, and hence the choice of $\lambda_\ell$.  Suppose that $\tilde\lambda_\ell$ were another valid choice, corresponding to the idele class character $\tilde\lambda$.  Then the representations $\pi'$ and $\tilde\pi'$ associated to $\sigma_\ell\otimes\lambda_\ell^{-1}$ and $\sigma_\ell\otimes\tilde\lambda_\ell^{-1}$, respectively, may indeed differ.  However, the representations $\pi'\otimes(\lambda\circ\det)$ and $\tilde\pi'\otimes(\tilde\lambda\circ\det)$ will be the same, as one can verify by comparing their parameters and using the strong multiplicity one theorem.  Hence $\pi$ is defined unambiguously.

\emph{$r_n$ is injective:} Here one uses the fact that knowledge of almost every local factor $L_v(\sigma_\ell,s)$ determines $\sigma_\ell$ up to isomorphism, essentially by Chebotarev density  (see the theorem on p.I-10 of \cite{Se}, which applies to all global fields).

\emph{$r_n$ is surjective:} Pick $\pi\in\mathcal A_n$ with central character $\omega$.  Then
\[
\omega=\omega_f\norm{\cdot}_{\mathbb A_k}^y
\]
for a finite order character $\omega_f$ and some $y\in\C$.  Indeed, because $k$ is a function field, this follows from the fact that the kernel of $\norm{\cdot}_{\mathbb A_k}$ is compact and countable, and so its complex characters are all of finite order, as well as the fact that the image of $\norm{\cdot}_{\mathbb A_k}$ is isomorphic to $\Z$.  Let $\lambda=\norm{\cdot}_{\mathbb A_k}^y$, which is an unramified idele class character, and let $\lambda_\ell\colon\Gamma_k\to\bar\Q_\ell^\times$ be the corresponding unramified $\ell$-adic character, in the sense described above.  Then $\pi\otimes(\lambda\circ\det)^{-1}\in\mathcal A_n'$ corresponds, by the theorem, to a representation $\sigma_\ell'\in\mathcal G_n'$, and one checks that this implies
\[
L_v(\pi,s)=L_v(\sigma_\ell'\otimes\lambda_\ell,s)
\]
for almost every $v$.  Thus $\pi$ corresponds to $\sigma_\ell:=\sigma_\ell'\otimes\lambda_\ell$.

\emph{$r_n$ is the unique bijection satisfying (\ref{localequality}) for almost all $v$:}  Were there another bijection with this property, we would wind up with two nonisomorphic cuspidal representations $\pi_1$, $\pi_2$ whose parameters match at almost every place $v$.  Thus, for some place $v$, we have an isomorphism $\pi_{1,v}\simeq\pi_{2,v}$ of unramified representations of $\GL_n(k_v)$.  This implies the central characters of $\pi_{1,v}$ and $\pi_{2,v}$ are both equal to $|\cdot|_v^z$ for some $z\in\C$; here, $|\cdot|_v$ is the normalized absolute value on $k_v$.  It follows that if
\[
\pi_i':=\pi_i\otimes (\norm{\cdot}_{\mathbb A_k}^{-\RE \spc z} \circ \det)
\]
for $i=1,2$, then each $\pi_i'$ has unitary central character and $\pi_{1,v}'\simeq\pi_{2,v}'$ for almost every $v$.  By the strong multiplicity one theorem, this gives $\pi_1'\simeq \pi_2'$, and hence $\pi_1\simeq \pi_2$, a contradiction.  So $r_n$ must be the unique bijection with the given property.\qed


\section{Proof of Theorem 1.1}\label{proof_sec}

Recall the setup and notation in \S\ref{prelim_sec}.  We have now reviewed the tools needed to prove our main result:

\begin{thm11}
For a smooth, projective, geometrically connected variety $X$ over a global function field $k$, we have $$ r_{\ell,k}^{(m)}\leq r_{\an,k}^{(m)},$$ and thus
\[
r_{\alg,k}^{(m)}\leq r_{\an,k}^{(m)},
\]
for any $0\leq m\leq \dim X$.
\end{thm11}

\begin{proof}
Let $\rho_\ell(m)\colon\Gamma_k\to\Aut_{\Q_\ell}V_\ell(m)$ denote the $m$-th Tate twist of $\rho_\ell$.

The semisimplification of the extension of $\rho_\ell(m)$ to an action on $V_\ell(m)\otimes\bar\Q_\ell$ is a direct sum of irreducible $\bar\Q_\ell$-representations.  An easy exercise shows the existence of a finite extension $E/\Q_\ell$ over which this semisimple decomposition is defined.  In other words, we have
\[
(V_\ell(m)\otimes E)^{\semisimp} =\bigoplus_i M_i
\]
where each $M_i$ is an $E$-vector space such that $\Gamma_k$ acts irreducibly on $M_i\otimes\bar \Q_\ell$ (and hence irreducibly on $M_i$) via the extension of $\rho_\ell(m)$.

Let $\rho_i\colon\Gamma_k\to \Aut_{\bar\Q_\ell}(M_i\otimes \bar\Q_\ell)$ denote the irreducible $\bar\Q_\ell$-representation defined by $\rho_\ell(m)$.  Recall from \S\ref{prelim_sec} that, because $\rho_\ell(m)$ arises from the cohomology of $X$, it is unramified at almost every place of $k$; thus $\rho_i$ inherits this property as well.  Hence, because $\rho_i$ is defined over a finite extension $E/\Q_\ell$ as just remarked, it follows that $\rho_i\in \mathcal G_{n_i}$ in the notation of \S\ref{ladic_sec}, where $n_i=\dim M_i$.  By Corollary \ref{easycor}, there is a unique cuspidal representation $\pi_i\in\mathcal A_{n_i}$ such that
\[\label{5a}
L_v(\pi_i,s)=L_v(\rho_i,s) \tag{5a}
\]
for almost every $v$.

Recall from \S\ref{prelim_sec} that the eigenvalues of almost every $\rho_\ell(Fr_v)$ are algebraic and have absolute value $q_v^m$ for any complex embedding.  Since the action of $\Gamma_k$ on the $\Q_\ell(m)$ is unramified everywhere, and $Fr_v$ acts on it by $q_v^{-m}$, it follows that the eigenvalues of almost every $\rho_\ell(m)(Fr_v)$ have absolute value 1 in every complex embedding.  Thus the same is true of the eigenvalues of almost every $\rho_i(Fr_v)$.  Following the proof of \ref{easycor}, this implies that the central character of $\pi_i$ is unitary.

For the rest of the proof, fix a finite set of places $S$ of $k$ satisfying the following: If $v\notin S$, then $\rho_\ell(m)$ (and hence each $\rho_i$) is unramified at $v$, $X$ has good reduction at $v$, and (\ref{5a}) holds for all $i$.

The knowledge of almost every local $L$-factor $L_v(\pi_i,s)$ equivalent to knowing the parameters of almost every unramified local representation $\pi_{i,v}$ and so, by the strong multiplicity one theorem (applicable because $\pi_i$ has unitary central character), this knowledge determines $\pi_i$ up to isomorphism.  On the other hand, Chebotarev density (see \cite{Se}, loc. cit.) shows that knowledge of almost every local $L$-factor $L_v(\rho_i,s)$ determines $\rho_i$ up to isomorphism.  Hence the equalities in (\ref{5a}), which hold for all $v\notin S$, show that $\pi_i$ is trivial (i.e., $L_v(\pi_i,s)=1-q_v^{-s}$ for all $v$) if and only if $\rho_i$ is trivial (i.e., $L_v(\rho_i,s)=1-q_v^{-s}$ for all $v$).  So by Corollary \ref{ord} we get
\[\label{5b}
-\text{ord}_{s=1}L^S(\rho_i,s)=\begin{cases} 
1&\text{if } \rho_i \text{ trivial}\\ \tag{5b}
0&\text{if } \rho_i \text{ nontrivial}\end{cases}.
\]

Next we note that for $v\notin S$, the local $L$-factor $L_v(\rho_\ell(m),s)$ is the same whether we regard $\Gamma_k$ as acting on $V_\ell(m)$ or on $V_\ell(m)\otimes\bar \Q_\ell$. Thus for $v\notin S$ we have
\[
L_v(\rho_\ell(m),s)=\prod_i L_v(\rho_i,s),
\]
and hence
\[
L^S(\rho_\ell(m),s)=\prod_i L^S(\rho_i,s).
\]
By (\ref{5b}) this gives
\begin{eqnarray*}
-\text{ord}_{s=1}L^S(\rho_\ell(m),s) &=& -\sum_i \text{ord}_{s=1}L^S(\rho_i,s) \\
&\geq& \dim_{\bar\Q_\ell} (V_\ell(m)\otimes \bar\Q_\ell)^{\Gamma_k}\\
&=& \dim_{\Q_\ell} V_\ell(m)^{\Gamma_k} \\
&=& r_{\ell,k}^{(m)}.
\end{eqnarray*}
On the other hand, applying the Tate twist to $\rho_\ell$ has the effect of translation on its $L$-function, namely $L^S(\rho_\ell(m),s)=L^S(\rho_\ell,s+m)$.  Therefore,
\[
r_{\an,k}^{(m)} = -\text{ord}_{s=m+1}L^S(\rho_\ell,s)=-\text{ord}_{s=1}L^S(\rho_\ell(m),s)\geq r_{\ell,k}^{(m)}.
\]
Since we automatically have $r_{\alg,k}^{(m)}\leq r_{\ell,k}^{(m)}$, this completes the proof.
\end{proof}


\section{Remarks on the analogous question for number fields}\label{CM_sec}

The formulation of the Tate Conjecture in \S\ref{prelim_sec} for the case of global function fields also makes sense when $k$ is a number field, provided that the finite set of places $S$ also includes the archimedean ones.  One can then ask when the inequality $r_{\alg,k}^{(m)}\leq r_{\an,k}^{(m)}$ is known to hold.  In most cases where this is known to be true, such as some Shimura varieties for $m=1$ \cite{BR},\cite{K},\cite{MR} or Hilbert modular fourfolds for $m=2$ \cite{R1}, or certain $K3$ surfaces \cite{IS}, the full Tate Conjecture has actually been established.

If the Langlands conjectures for $\GL_n$ over number fields could be established, one could use the methods in this article to prove
\[\label{ineq}
r_{\alg,k}^{(m)}\leq r_{\ell,k}^{(m)}\leq r_{\an,k}^{(m)},\tag{6a}
\]
since Theorem \ref{JacSha} holds, in fact, for all global fields.  We remark, though, that this conjectural correspondence for number fields is not just a simple analogue of Theorem \ref{Lafforgue} and Corollary \ref{easycor}, due to the extra difficulties imposed by the places lying over $\ell$ and $\infty$.  One notable difference is that one must restrict attention to so-called \emph{algebraic} cuspidal representations \cite{Cl}.  In case $n=1$, this corresponds to A. Weil's notion of an \emph{idele class character of type $A_0$} \cite{Weil}.  This is an idele class character $\chi$ such that, if $v$ is archimedean, then $\chi_v(z)=z^{p_v}\bar z^{q_v}$; furthermore, we have $p_v+q_v=w$ for some $w\in\Z$ (the \emph{weight} of $\chi$) and all such $v$.

We note in passing that an idele class character of type $A_0$ with nonzero weight gives a counterexample to Lemma \ref{charlem} in the number field case, since no nonzero power would be trivial at the archimedean places.

Unfortunately, as it currently stands, the representation $\rho_\ell$ is known to correspond to an algebraic cuspidal representation in only a handful of cases.  Below we discuss one case where enough is known about $\rho_\ell$ to establish (\ref{ineq}).  Recall that an abelian variety $X$ over $k$ is said to be potentially CM-type if we can find a commutative semisimple algebra $\Lambda$ of dimension $2(\dim X)$ over $\Q$ and an isomorphism
\[
\theta\colon \Lambda \tilde{\To} \End_{\bar k}(X)\otimes \Q.
\]

\begin{prop}
Let $X$ be abelian variety over the number field $k$ which is potentially CM-type.  Then
\[
r_{\alg,k}^{(m)}\leq r_{\an,k}^{(m)}
\]
for all $0\leq m\leq \dim X$.
\end{prop}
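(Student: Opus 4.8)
The plan is to imitate the proof of Theorem 2.1 almost verbatim, supplying by hand the one ingredient that was furnished there by Lafforgue's theorem and its Corollary~\ref{easycor}: the automorphy over $k$ of the irreducible $\bar\Q_\ell$-constituents of $\rho_\ell(m)$. Since $r_{\alg,k}^{(m)}\le r_{\ell,k}^{(m)}$ holds unconditionally (\S1), it suffices to prove $r_{\ell,k}^{(m)}=r_{\an,k}^{(m)}$. As in \S5, pick a finite extension $E/\Q_\ell$ over which the semisimplification of $\rho_\ell(m)$ on $V_\ell(m)\otimes\bar\Q_\ell$ is defined, write it as $\bigoplus_i M_i$, and let $\rho_i\colon\Gamma_k\to\Aut_{\bar\Q_\ell}(M_i\otimes\bar\Q_\ell)$ be the associated irreducible $\bar\Q_\ell$-representations; because $\rho_\ell(m)$ comes from the cohomology of $X$ it is unramified at almost all $v$, so each $\rho_i\in\mathcal G_{n_i}$ with $n_i=\dim M_i$. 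The remainder of the argument of \S5 then carries over provided one knows that each $\rho_i$ corresponds to a cuspidal automorphic representation $\pi_i$ of $\GL_{n_i}(\mathbb A_k)$ with $L_v(\pi_i,s)=L_v(\rho_i,s)$ for almost all $v$: Theorem~\ref{JacSha}, and hence Corollary~\ref{ord}, hold over an arbitrary global field (as noted in \S6); the central character of $\pi_i$ is necessarily unitary, since the eigenvalues of $\rho_\ell(m)(Fr_v)$ are Weil numbers of weight $0$ (the twist by $m$ cancels the weight $2m$ of $H^{2m}$, exactly as observed in \S5); and strong multiplicity one together with Chebotarev density show that $\pi_i$ is trivial precisely when $\rho_i$ is.

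To produce the $\pi_i$ we exploit that $X$ is potentially of CM type. Choose a finite Galois extension $k'/k$ large enough that $\End_{k'}(X)\otimes\Q$ contains a commutative semisimple $\Q$-algebra $\Lambda$ of dimension $2\dim X$ acting on $X_{k'}$, so that $\Lambda$ is a product of CM fields. Decomposing $H^1_{\text{\'{e}t}}(X\times_k\bar k,\Q_\ell)\otimes\bar\Q_\ell$ under this $\Lambda$-action breaks it, as a $\Gamma_{k'}$-module, into one-dimensional subspaces; by the main theorem of complex multiplication (Shimura--Taniyama, Weil \cite{Weil}; see also Serre--Tate), each of these is the $\ell$-adic realization of an algebraic Hecke character of $k'$, i.e.\ of an idele class character of type $A_0$. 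Since $X$ is an abelian variety, $H^{2m}_{\text{\'{e}t}}(X\times_k\bar k,\Q_\ell)=\bigwedge^{2m}H^1_{\text{\'{e}t}}(X\times_k\bar k,\Q_\ell)$, so, after the Tate twist, the restriction of $\rho_\ell(m)\otimes\bar\Q_\ell$ to $\Gamma_{k'}$ is a direct sum of $\ell$-adic realizations of algebraic Hecke characters of $k'$, now of weight $0$ (the twist cancelling the weight $2m$). By Clifford theory each irreducible $\Gamma_k$-constituent $\rho_i$ is, up to isomorphism, automorphically induced from such a Hecke character over an intermediate field $k\subseteq k_i\subseteq k'$; automorphic induction of algebraic Hecke characters then yields the required cuspidal $\pi_i\in\mathcal A_{n_i}$ over $k$ --- this is unconditional for solvable sub-extensions by Arthur--Clozel, and is available in the generality needed here as part of the classical fact that the Galois representations attached to abelian varieties with potential CM are automorphic over any number field.

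With the $\pi_i$ at hand, the conclusion is reached exactly as at the end of \S5. Fix a finite $S$ such that for $v\notin S$ every $\rho_i$ is unramified, $X$ has good reduction, and $L_v(\pi_i,s)=L_v(\rho_i,s)$; then $L^S(\rho_\ell(m),s)=\prod_i L^S(\rho_i,s)=\prod_i L^S(\pi_i,s)$, and Corollary~\ref{ord} gives $-\ord_{s=1}L^S(\rho_i,s)=1$ or $0$ according as $\rho_i$ is or is not trivial. Summing over $i$ yields $-\ord_{s=1}L^S(\rho_\ell(m),s)=\dim_{\bar\Q_\ell}(V_\ell(m)\otimes\bar\Q_\ell)^{\Gamma_k}=\dim_{\Q_\ell}V_\ell(m)^{\Gamma_k}=r_{\ell,k}^{(m)}$, and since $L^S(\rho_\ell(m),s)=L^S(\rho_\ell,s+m)$ the left-hand side equals $-\ord_{s=m+1}L^S(\rho_\ell,s)=r_{\an,k}^{(m)}$. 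Thus $r_{\ell,k}^{(m)}=r_{\an,k}^{(m)}$, which together with $r_{\alg,k}^{(m)}\le r_{\ell,k}^{(m)}$ finishes the Proposition.

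The step I expect to be the main obstacle is the descent of automorphy from $k'$ to $k$. Over $k'$ the decomposition into algebraic Hecke characters is classical, but $\Gal(k'/k)$ need not be solvable (it can fail to be for $X$ of large dimension), so passing from the $k'$-picture to cuspidal representations of $\GL_{n_i}(\mathbb A_k)$ is not a formal consequence of Arthur--Clozel alone; one must appeal to the (classical, though not purely formal) automorphy of the Galois representations of CM abelian varieties over an arbitrary number field. Alternatively, one may avoid invoking $\GL_{n_i}$-automorphy and argue directly with Hecke and twisted Artin $L$-functions: by Clifford theory and Brauer's theorem, $L^S(\rho_\ell(m),s)$ is, up to finitely many Euler factors, a product and quotient of $L$-functions of type-$A_0$, weight-$0$ idele class characters over intermediate fields, whose behavior at the edge $s=1$ is governed by Hecke's non-vanishing theorem, and Frobenius reciprocity then matches the resulting pole order with $\dim_{\Q_\ell}V_\ell(m)^{\Gamma_k}$. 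The remaining points --- consistency of the normalizations (geometric Frobenius, the Tate twist, the translation $L^S(\rho_\ell(m),s)=L^S(\rho_\ell,s+m)$) and the validity over number fields of Corollary~\ref{ord} and Theorem~\ref{JacSha} --- are routine, the last already being remarked upon in \S6.
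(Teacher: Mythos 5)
Your primary route has a genuine gap that you yourself flag but do not resolve. You claim that the automorphy over $k$ of each irreducible constituent $\rho_i$ is ``available in the generality needed here as part of the classical fact that the Galois representations attached to abelian varieties with potential CM are automorphic over any number field.'' This is not, in fact, a theorem in the required generality. When $\Gal(k'/k)$ is nonsolvable, Clifford theory gives $\rho_i$ as an induction from an intermediate field $k_i$ of a character tensored with a projective representation of the isotropy quotient, and producing a cuspidal $\pi_i$ on $\GL_{n_i}(\mathbb A_k)$ from this would require a case of the Artin conjecture that remains open. Arthur--Clozel covers cyclic (hence solvable) induction, as you note, but there is no unconditional ``classical'' statement that fills the nonsolvable gap, and invoking potential automorphy would only give automorphy after a further base change, not over $k$ itself.

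The paper sidesteps this entirely by never asserting $\GL_{n}$-automorphy of the $\rho_i$. Instead it passes from the $\ell$-adic representations $\rho_i$ to irreducible \emph{complex} representations $r_i$ of the Weil group $W_k$, via a theorem of Yoshida that applies precisely because ${\rho_i}_{\vert_{\Gamma_L}}$ is a sum of characters attached to type-$A_0$ idele class characters. One then uses the structure theory of Weil group representations (each $r_i$ is induced from a Galois-type representation twisted by a character) together with Brauer's induction theorem to write $L^S(r_i,s)$, hence $L^S(\rho_i,s)$, as a ratio of products of Hecke $L$-functions of idele class characters over intermediate fields. The pole order at $s=1$ is then controlled by the Hecke/Tate analogue of Corollary~\ref{ord}, and Frobenius reciprocity identifies it with $\dim N_i^{W_k}$. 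Your briefly sketched ``alternatively, \dots argue directly with Hecke and twisted Artin $L$-functions'' paragraph is essentially this argument, and it is the route you should have pursued as the main line; the missing piece in your sketch is the passage from the $\ell$-adic $\rho_i$ to complex Weil group representations, which cannot be done by bare Brauer and requires Yoshida's theorem (or an equivalent construction) as the bridge. Promoting that sketch to the primary argument, and supplying the Yoshida step, would close the gap.
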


\begin{proof}
Keeping the notation above, there is a finite Galois extension $L/k$ such that all elements of
\[
\theta(\Lambda)\cap \End_{\bar k} (X)
\]
are rational over $L$, and thus the action of $\Gamma_L$ on $H_{\text{\'et}}^1(X\times_k\bar k,\Q_\ell)$ is abelian.  Hence $\Gamma_L$ acts via a direct sum of characters if we extend scalars to $\bar\Q_\ell$, and these characters are associated to idele class characters of type $A_0$ in the sense given in the proof of Corollary \ref{easycor} \cite{ST}.

It is known that, as with any abelian variety, we have an isomorphism of $\Gamma_k$-modules
\[
H_{\text{\'et}}^{r}(X\times_k \bar k,\Q_\ell)\simeq \wedge^{r} H_{\text{\'et}}^1(X\times_k\bar k,\Q_\ell)
\]
(see \cite{M}, for instance).  Therefore the action of $\Gamma_L$ on $H_{\text{\'et}}^{r}(X\times_k \bar k,\Q_\ell)$ is also associated to idele class characters of type $A_0$ after extension to $\bar\Q_\ell$.

We focus on the case $r=2m$, letting $V_\ell=H_{\text{\'et}}^{2m}(X\times_k \bar k,\Q_\ell)$ and $\rho_\ell(m)\colon \Gamma_k\to \Aut_{\Q_\ell}(V_\ell(m))$ as in \S\ref{proof_sec}.  Then once again the semisimplification of the extension of $\rho_\ell(m)$ is a direct sum of irreducible $\bar\Q_\ell$-representations $M_i$ of $\Gamma_k$:
\[
(V_\ell(m)\otimes\bar\Q_\ell)^{\semisimp} = \bigoplus_i M_i.
\]
As before, let $\rho_i$ denote the $\bar\Q_\ell$-representation defined on $M_i$ by $\rho_\ell(m)$.

The observations above say that ${\rho_i}_{\vert_{\Gamma_L}}$ is a direct sum of characters associated to idele class characters of type $A_0$.  Due to this condition, a result of H. Yoshida (\cite{Y}, Theorem 1) gives a continuous finite-dimensional complex representation 
\[
r_i\colon W_{k}\to \Aut_\C(N_i)
\]
of the Weil group $W_k$ of $k$ and a finite set of places $S$ such that
\[
L_v(r_i,s)=L_v(\rho_i,s)
\]
for all $v\notin S$.  Yoshida's construction guarantees that $r_i$ is irreducible if and only if $\rho_i$ is irreducible (\cite{Y}, Theorems 1 and 2), so $r_i$ is irreducible.  We refer the reader to \cite{T2} for the notions of Weil groups, their representations, and the associated $L$-functions, as well as for facts listed below; for a very complete discussion of these matters, see \cite{De2}.

Following the same strategy as in the proof of Theorem 1.1, it suffices, for the completion of the proposition, to establish that
\[\label{JSanalog}
-\text{ord}_{s=1} L^{S}(r_i, s) = \dim_{\C} N_i^{W_k}.\tag{6b}
\]
We will do this by relating $r_i$ to characters of the Weil group, which are just idele class characters, and then using the analogue of Corollary \ref{ord} for the $L$-functions of such characters.

First we use the existence of a finite extension $E_i/k$ such that $r_i$ is the induction of a primitive representation of $W_{E_i}$.  (Here, primitive means that it is not induced from a smaller subgroup.)  In fact, one knows that $r_i=\text{Ind}_{E_i}^k (t_i\tensor \chi_i)$, where $t_i$ is a representation of $W_{E_i}$ of \emph{Galois type} and $\chi_i$ is a character of $W_{E_i}$. Thus $t_i$ is a representation of $W_{E_i}$ pulled back via the surjection $W_{E_i}\to \Gal(\bar k/E_i)$, while $\chi_i$ is an idele class character by virtue of the isomorphism $\mathbb A_{E_i}^\times/E_i^\times\simeq W_{E_i}^{ab}$.

Next, Brauer's induction theorem says that
\[
t_i \oplus \bigoplus_\alpha n_\alpha \text{Ind}_{F_\alpha}^{E_i} (\psi_\alpha) \simeq \bigoplus_\beta n_\beta' \text{Ind}_{F_\beta'}^{E_i} (\psi_\beta')
\]
for some finite extensions $F_\alpha/E_i$, $F_\beta'/E_i$, (idele class) characters $\psi_\alpha$, $\psi_\beta'$, and positive integers $n_\alpha$, $n_\beta'$.  (In other words, $t_i$ is a finite virtual sum of inductions of characters in the Grothendieck group.)  Since $\text{Ind}(\psi)\tensor \chi_i\simeq \text{Ind}(\psi\tensor\text{Res}\spc(\chi_i))$, we have
\[
\label{brauer}
(t_i\tensor\chi_i) \oplus \bigoplus_\alpha n_\alpha \text{Ind}_{F_\alpha}^{E_i} (\psi_\alpha\text{Res}_{F_\alpha}(\chi_i)) \simeq \bigoplus_\beta n_\beta' \text{Ind}_{F_\beta'}^{E_i} (\psi_\beta'\text{Res}_{F_\beta'}(\chi_i)).
\tag{6c}
\]
From this we conclude two things.

The first is that
\begin{eqnarray*}
L^S(r_i,s) &=& L^{S_i}(t_i\tensor\chi_i,s) \\ 
&=& \prod_\alpha L^{S_\alpha}(\psi_\alpha\text{Res}_{F_\alpha}(\chi_i),s)^{-n_\alpha}\prod_\beta L^{S_\beta'}(\psi_\beta'\text{Res}_{F_\beta'}(\chi_i),s)^{n_\beta'},
\end{eqnarray*}
where $S_i$ (resp., $S_\alpha$, $S_\beta'$) is the finite set of places in $E_i$ (resp., $F_\alpha$, $F_\beta'$) lying above those in $S$.  Hence we have
\begin{align*}
\label{6b}
-\text{ord}_{s=1} L^S(r_i,s) =& \sum_\alpha n_\alpha \text{ord}_{s=1} L^{S_\alpha}(\psi_\alpha\text{Res}_{F_\alpha}(\chi_i),s) \\
&- \sum_\beta n_\beta' \text{ord}_{s=1}L^{S_\beta'}(\psi_\beta'\text{Res}_{F_\beta'}(\chi_i),s).
\tag{6d}
\end{align*}
The $L$-functions on the right side of (\ref{6b}) are of the form $L^\Sigma(\omega, s)$ for some idele-class character $\omega$ and finite set of places $\Sigma$, and for such $L$-functions we have
\[
-\text{ord}_{s=1}L^\Sigma(\omega,s)=\begin{cases} 
1&\text{if } \omega=1\\ 
0&\text{if } \omega\neq 1\end{cases}.
\]
Hence (\ref{6b}) becomes
\begin{align*}
\label{6c}
-\text{ord}_{s=1} L^S(r_i,s) =& \sum_\beta n_\beta'
\left\{
\begin{array}{cl}
  1   & \text{if } \psi_\beta'\text{Res}_{F_\beta'}(\chi_i)=1 \\
  0   & \text{if } \psi_\beta'\text{Res}_{F_\beta'}(\chi_i)\neq 1
\end{array}
\right\} \\
& -
\sum_\alpha n_\alpha
\left\{
\begin{array}{cl}
  1   & \text{if } \psi_\alpha\text{Res}_{F_\alpha}(\chi_i)=1 \\
  0   & \text{if } \psi_\alpha\text{Res}_{F_\alpha}(\chi_i)\neq 1
\end{array}
\right\}.
\tag{6e}
\end{align*}

The second consequence of (\ref{brauer}) is that the dimension of the trivial representation in $t_i\tensor \chi_i$ (and in $r_i$ by induction) is equal to
\[
\sum_\beta n_\beta'
\left\{
\begin{array}{cl}
  1   & \text{if } \psi_\beta'\text{Res}_{F_\beta'}(\chi_i)=1 \\
  0   & \text{if } \psi_\beta'\text{Res}_{F_\beta'}(\chi_i)\neq 1
\end{array}
\right\}
-
\sum_\alpha n_\alpha
\left\{
\begin{array}{cl}
  1   & \text{if } \psi_\alpha\text{Res}_{F_\alpha}(\chi_i)=1 \\
  0   & \text{if } \psi_\alpha\text{Res}_{F_\alpha}(\chi_i)\neq 1
\end{array}
\right\},
\]
since $\text{Ind}(\omega)$ will contain the trivial representation only if $\omega=1$, and in that case it will occur with dimension one.  So putting this together with (\ref{6c}), we get (\ref{JSanalog}) as desired.
\end{proof}

\bibliography{tateinequality_arxiv_update}{}
\bibliographystyle{math}

\end{document}